\theoremstyle{plain}
\newtheorem{thm}{Theorem}[section]
\newtheorem*{thm*}{Theorem}
\newtheorem{prop}[thm]{Proposition}
\newtheorem{lem}[thm]{Lemma}
\newtheorem{cor}[thm]{Corollary}
\theoremstyle{definition}
\newtheorem{defn}[thm]{Definition}
\theoremstyle{remark}
\newtheorem*{rem*}{Remark}
\newtheorem*{notation*}{Notation}
\newtheorem*{ex*}{Example}
\newtheorem*{ackn*}{Acknowledgements}
\theoremstyle{plain}
\newtheorem*{claim*}{Claim}
\newcommand{\Vol}{\operatorname{Vol}}
\newcommand{\diam}{\operatorname{diam}}
\newcommand{\FillRad}{\operatorname{FillRad}}
\newcommand{\FillVol}{\operatorname{FillVol}}
\newcommand{\dil}{\operatorname{dil}}
\newcommand{\sys}{\operatorname{sys}}
\newcommand{\SR}{\operatorname{SR}}
\newcommand{\FR}{\operatorname{FR}}
\newcommand{\FV}{\operatorname{FV}}
\newcommand{\F}{\operatorname{F}}
\begin{document}

\title[Filling inequalities do not depend on topology]{Filling inequalities do not depend on topology}
\author{Michael Brunnbauer}
\address{Mathematisches Institut, Ludwig-Maximilians-Universit\"at M\"unchen, Theresienstr.\ 39, D-80333 M\"unchen, Germany}
\email{michael.brunnbauer@mathematik.uni-muenchen.de}
\date{\today}
\keywords{filling radius, filling volume}
\subjclass[2000]{Primary 53C23;   
                 Secondary 53C20} 

\begin{abstract}
Gromov's universal filling inequalities relate the filling radius and the filling volume of a Riemannian manifold to its volume. The main result of the present article is that in dimensions at least three the optimal constants in the filling inequalities depend only on dimension and orientability, not on the manifold itself. This contrasts with the analogous situation for the optimal systolic inequality, which does depend on the manifold.
\end{abstract}

\maketitle



\section{Introduction}

One of the most important curvature-free bounds on the volume of a Riemannian manifold is provided by Gromov's systolic inequality
\begin{equation}\label{sys_ineq}
\sys_1(M,g)^n \leq C_n\cdot \Vol(M,g), 
\end{equation}
which holds for all essential connected closed $n$-dimensional Riemannian manifolds $(M,g)$. Here, $M$ is called \emph{essential} if there exists an aspherical complex $K$ and a map $M\to K$ that does not contract to the $(n-1)$-skeleton of $K$. So in particular, all aspherical manifolds are essential. The \emph{systole} $\sys_1(M,g)$ is the length of the shortest noncontractible loop in $M$. A proof of \eqref{sys_ineq} can be found in \cite{Gromov(1983)}, Appendix 2, (B'$_1$). (Note also the paper \cite{Guth(2006)}, which contains a more detailed version of Gromov's proof.)

If one takes into account the topology of $M$, the optimal (smallest) value of the constant $C(M)$ such that an inequality \eqref{sys_ineq} holds for all Riemannian metrics $g$ on $M$ can be improved. This best value is given by the \emph{optimal systolic ratio}
\[ \SR(M) := \sup_g \frac{\sys_1(M,g)^n}{\Vol(M,g)}. \]

Its exact value is known only for three essential manifolds (apart from the trivial case of the circle): the two-torus, the real projective plane, and the Klein bottle. (See the survey article \cite{CK(2003)} and the book \cite{Katz(2007)}, which provide an extensive overview of systolic geometry, including a discussion of the filling invariants.) 

Nevertheless, it is known that in higher dimensions $\SR(M)$ also varies with $M$. For example $\SR(T^n)\geq 1$ as a trivial computation for a flat torus shows. But there exist hyperbolic manifolds $M_{hyp}$ with arbitrarily small optimal systolic ratio: Gromov proved an upper bound for the optimal systolic ratio by the simplicial volume
\[ \SR(M) \leq c'_n\frac{\log^n(1+\|M\|)}{\|M\|}, \]
which holds for all connected closed oriented manifolds (\cite{Gromov(1983)}, Theorem 6.4.D'). For hyperbolic manifolds the simplicial volume is known to be proportional to the volume:
\[ \|M_{hyp}\| = R_n^{-1} \cdot \Vol(M_{hyp}). \]
(This is due to Thurston, see \cite{Gromov(1982)}, page 11.) Since there are hyperbolic manifolds of arbitrarily large volume (just take a sequence of finite coverings with the number of sheets tending towards infinity), the above inequality shows that $\SR(M_{hyp})$ can become arbitrarily small.

To prove the systolic inequality \eqref{sys_ineq} Gromov introduced the \emph{filling radius} and the \emph{filling volume} of a Riemannian manifold. In \cite{Gromov(1983)}, Theorem 1.2.A and Theorem 2.3 he derived the filling inequalities
\begin{align*}
\FillRad(M,g)^n &\leq A_n \cdot\Vol(M,g) \;\;\;\mbox{and} \\
\FillVol(M,g)^{n/(n+1)} &\leq B_n \cdot\Vol(M,g),
\end{align*}
that hold for all connected closed Riemannian manifolds. (Recently, Wenger found a shorter proof for the second inequality, see \cite{Wenger(2007)}.) Again fixing the manifold $M$, one defines in analogy to the systolic ratio two optimal filling ratios:
\begin{align*}
\FR(M) &:= \sup_g \frac{\FillRad(M,g)^n}{\Vol(M,g)} \;\;\;\mbox{and} \\
\FV(M) &:= \sup_g \frac{\FillVol(M,g)^{n/(n+1)}}{\Vol(M,g)}.
\end{align*}
These topological invariants are the best values of the constants $A(M)$ and $B(M)$ such that the filling inequalities are true for all Riemannian metrics $g$ on $M$.

In contrast to the behaviour of the optimal systolic ratio, the main result of this article is the following:

\begin{thm}\label{main_thm_intro}
If $M$ and $N$ are connected closed manifolds of the same dimension $n\geq 3$ and if they are either both orientable or both nonorientable, then
\[ \FR(M) = \FR(N) \;\;\;\mbox{and}\;\;\; \FV(M) = \FV(N). \]
\end{thm}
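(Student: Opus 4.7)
My plan is to establish the equalities $\FR(M)=\FR(N)$ and $\FV(M)=\FV(N)$ by a symmetry argument: it suffices to prove $\FR(M)\geq\FR(N)$ (and $\FV(M)\geq\FV(N)$); swapping the roles of $M$ and $N$ then yields equality. Both assertions admit essentially the same treatment, so I will concentrate on the filling-radius statement. Fix $\epsilon>0$ and a metric $h$ on $N$ with $\FillRad(N,h)^n/\Vol(N,h)>\FR(N)-\epsilon$; the task is to produce a metric $g$ on $M$ whose ratio is at least $\FR(N)-2\epsilon$.

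The construction proceeds by modifying $(N,h)$ topologically through local operations that barely perturb the relevant filling quantities. The first key ingredient is a stability principle: if one excises a small ball $B_\rho\subset(N,h)$ and glues in a compact Riemannian $n$-manifold $W$ with $\partial W\cong S^{n-1}$ whose diameter and volume are $O(\rho)$, then the resulting closed Riemannian manifold has both filling radius and volume differing from those of $(N,h)$ by $O(\rho)$. This stability follows from comparing Kuratowski embeddings: the two manifolds are isometric outside the modified region, so a near-optimal filling chain for one can be adapted, with only a local correction, to a chain for the other. For the nonorientable case one works with $\mathbb{Z}/2$ coefficients throughout.

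A single such modification transforms $N$ into a connected sum $N\# P$. To reach an arbitrary $M$ of the same orientability I would supplement bubble insertions with \emph{thin-handle surgeries}: excising a tubular neighbourhood $S^k\times D^{n-k}_\rho$ of small normal radius $\rho$ and gluing in $D^{k+1}\times S^{n-k-1}_\rho$, thereby realising an elementary cobordism between $N$ and its $k$-surgery. A Kuratowski embedding argument analogous to the one above shows that such a thin-handle surgery alters $\Vol$ and $\FillRad$ by $O(\rho)$ as well. Combined via handle decomposition, bubble insertions and thin-handle surgeries allow one to pass from $N$ to $M$ in finitely many steps whenever the two manifolds have the same dimension $n\geq 3$ and the same orientability. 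Applying the stability principle along this chain yields $\FR(M)\geq\FR(N)-C\epsilon$, whence equality.

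The main obstacle is realising the topological transition $N\rightsquigarrow M$ as a sequence of local metric modifications with controlled effect on $\FillRad$. Bubble insertions are clean because they are confined to a small ball, but thin-handle surgeries involve altering the metric along an extended though thin region; the analytic heart of the proof lies in showing that the filling radius is nevertheless preserved to first order, via a careful Kuratowski embedding comparison exploiting the small normal radius of the tube. The hypothesis $n\geq 3$ enters essentially here: in surfaces there is not enough room for handles to be simultaneously thin and to realise the desired topological changes, which is precisely why the corresponding statement fails for $n=2$.
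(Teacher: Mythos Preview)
Your strategy is genuinely different from the paper's. The paper never builds a metric on $M$ out of one on $N$. Instead it proves two abstract properties of $\FR_{\mathbb K}$ and $\FV_{\mathbb K}$ on simplicial complexes: a \emph{comparison axiom} (an $(n,1)$-monotone map $V\to W$ surjective on $H_n$ gives $\F(V)\ge\F(W)$) and an \emph{extension axiom} (attaching cells of dimension $<n$ to $V$ leaves $\F(V)$ unchanged). It then invokes a purely topological fact (Theorem~\ref{mon_map_thm}): for $n\ge 3$ there is an extension $V$ of $M$ and an $(n,1)$-monotone map $N\to V$ sending $[N]$ to $[V]$. Chaining the axioms gives $\F(N)\ge\F(V)=\F(M)$, and symmetry concludes. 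The hypothesis $n\ge 3$ enters only through this topological input, not through any metric estimate.

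Your proposal, by contrast, attempts a direct geometric passage $(N,h)\rightsquigarrow(M,g)$ via connected sums and thin-handle surgeries, claiming each step perturbs $\FillRad$ by $O(\rho)$. The gap is exactly the step you yourself flag as ``the analytic heart'': you do not prove it, and it is not a routine Kuratowski comparison. For a $k$-surgery the attaching sphere $S^k\subset N$ has diameter fixed by $h$ and \emph{not} controlled by the normal radius $\rho$; the glued-in piece $D^{k+1}\times S^{n-k-1}_\rho$ must span that sphere and therefore also has macroscopic diameter. Hence the Kuratowski images of $N$ and of the surgered manifold do not agree outside a region of small diameter, and a filling chain for one cannot be converted into a filling chain for the other by a ``local correction'' of size $O(\rho)$ in any evident way. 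For calibration, the paper's proof of the much more modest extension axiom (attaching a \emph{single} cell of dimension $<n$) already requires an elaborate argument with smoothed Kuratowski embeddings, tubular neighbourhoods, and a Mayer--Vietoris step; your claim is strictly harder, since surgery replaces top-dimensional pieces rather than merely adjoining low-dimensional ones. Finally, your explanation of why $n\ge 3$ is needed does not match the actual obstruction: thin handles exist on surfaces too; what fails in dimension~$2$ (in the paper's framework) is the algebraic-topological existence statement, not a metric estimate.
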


This will be proved as a part of the more detailed Theorem \ref{main_thm}. The proof uses an axiomatic approach, that was first introduced by Babenko in \cite{Babenko(2006)} for the optimal systolic ratio and applied to further invariants in \cite{Sabourau(2006)} and \cite{Brunnbauer(2007a)}. (Implicitly, it can already be found in works of Babenko, Katz, and Suciu on systolic freedom. The keywords are `topological meromorphic maps' and `$(n,k)$-morphisms', see \cite{CK(2003)}, section 4.3.) 

The main idea is the following: both optimal filling ratios fulfill a `comparison axiom' that roughly says that if there is a degree one map $M\to N$, then $\F(M)\geq \F(N)$. (Here and later on, $\F$ will always serve as a placeholder for $\FR$ or $\FV$.) Moreover, they also satisfy an `extension axiom' which says that if one attaches finitely many cells of dimension less than the dimension of the manifold, then the value of the filling ratios does not change. The CW complex obtained in this way is called an `extension' of the original manifold.

If $M$ and $N$ have the same orientation behaviour, then there exists an extension $X$ of $N$ and a `degree one' map $M\to X$. (This is a special case of a more general theorem in \cite{Brunnbauer(2007a)}. In the orientable case this can be seen more directly by using the Hurewicz theorem.) Applying both axioms one sees that 
\[ \F(M)\geq \F(X) = \F(N). \]
Inverting the roles of $M$ and $N$ gives equality and proves the theorem.

The paper is organized as follows: in the next section necessary definitions are recalled. The main part of the proof can be found in section \ref{filling_axioms}, where $\FR$ and $\FV$ are shown to satisfy the comparison and extension axioms. In section \ref{constancy} the proof will be concluded and finally, the results and some open questions are discussed in the last section.

\begin{ackn*}
I would like to thank D.\,Kotschick for suggesting to investigate filling invariants and for many helpful discussions. I am also grateful to the anonymous referee for some useful remarks. Financial support from the \emph{Deutsche Forschungsgemeinschaft} is gratefully acknowledged.
\end{ackn*}


\section{Filling invariants}

The filling radius and the filling volume were introduced by Gromov in his Filling paper \cite{Gromov(1983)}. Using continuous piecewise smooth Riemannian metrics their definitions extend to simplicial complexes. Before recalling these definitions we will focus on the so-called `universal property' of the Banach space of all bounded functions on some set.

\begin{defn}
Let $f:Y\to X$ be a continuous map between metric spaces. The \emph{dilatation} of $f$ is given by
\[ \dil(f) := \sup_{y,y'\in Y, y\neq y'} \frac{d(f(y),f(y'))}{d(y,y')}, \]
i.\,e.\ it is the smallest Lipschitz constant for $f$. We define the \emph{dilatation of $f$ with respect to $y$} as 
\[ \dil(f,y) := \sup_{y'\in Y, y\neq y'} \frac{d(f(y),f(y'))}{d(y,y')}. \]
\end{defn}

For a set $V$ let $L^\infty(V)$ denote the Banach space of all bounded functions $f:V\to\mathbb{R}$ with the uniform norm $\|f\|:=\sup_{v\in V} |f(v)|$. It has the following \emph{universal property}:

\begin{lem}\label{univ_prop}
Let $Y\subset X$ be a nonempty subspace of a metric space and let $f:Y\to L^\infty(V)$ be a Lipschitz map. Then the map $F:X \to L^\infty(V)$ defined by
\[ F_x(v) := \inf_{y\in Y} (f_y(v) + \dil(f,y) \cdot d(x,y)) \]
is a Lipschitz continuous extension of $f$ with $\dil(F)=\dil(f)$ and $\dil(F,y)=\dil(f,y)$ for all $y\in Y$.
\end{lem}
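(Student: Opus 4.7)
The formula $F_x(v) = \inf_{y \in Y}(f_y(v) + \dil(f,y) \cdot d(x,y))$ is a McShane-type Lipschitz extension, with the weights $\dil(f,y)$ chosen precisely so as to preserve not merely the global Lipschitz constant but also the pointwise dilatation at each point of $Y$. Before saying anything about dilatations, I would first verify that $F_x$ genuinely lies in $L^\infty(V)$, i.e.\ is a bounded function of $v$. Fixing an arbitrary base point $y_0 \in Y$, the upper bound $F_x(v) \leq f_{y_0}(v) + \dil(f,y_0) \cdot d(x,y_0)$ is immediate by testing $y = y_0$. For a matching lower bound independent of $y$, I would apply $|f_y(v) - f_{y_0}(v)| \leq \dil(f,y) \cdot d(y,y_0)$ together with $d(y,y_0) \leq d(y,x) + d(x,y_0)$; the positive term $\dil(f,y) \cdot d(x,y)$ then absorbs the troublesome $\dil(f,y) \cdot d(y,x)$, leaving $f_{y_0}(v) - \dil(f,y) \cdot d(x,y_0) \geq f_{y_0}(v) - \dil(f) \cdot d(x,y_0)$, using $\dil(f,y) \leq \dil(f) < \infty$.

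Next I would check the extension property: for $x \in Y$, testing $y = x$ gives $F_x(v) \leq f_x(v)$, while each summand $f_y(v) + \dil(f,y) \cdot d(x,y)$ is bounded below by $f_x(v)$ via the pointwise Lipschitz estimate, so $F_x = f_x$. The bound $\dil(F) \leq \dil(f)$ is then a routine infimum comparison: for $x, x' \in X$, the triangle inequality $d(x,y) \leq d(x,x') + d(x',y)$ combined with $\dil(f,y) \leq \dil(f)$ yields $F_x(v) \leq F_{x'}(v) + \dil(f) \cdot d(x,x')$, and symmetry closes the argument. The reverse inequalities $\dil(F) \geq \dil(f)$ and $\dil(F,y) \geq \dil(f,y)$ are automatic from the fact that $F$ extends $f$, since the suprema defining the right-hand sides are over subsets of those defining the left-hand sides.

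The main obstacle, I expect, will be the refined pointwise estimate $\dil(F,y) \leq \dil(f,y)$ for $y \in Y$. Testing $y' = y$ in the infimum defining $F_x$ yields the easy direction $F_x(v) - F_y(v) \leq \dil(f,y) \cdot d(x,y)$. The opposite direction requires showing $f_y(v) - f_{y'}(v) - \dil(f,y') \cdot d(x,y') \leq \dil(f,y) \cdot d(x,y)$ for \emph{every} $y' \in Y$, and this is where the weight function $\dil(f,\cdot)$ really has to pull its weight. I would argue by a case split on whether $\dil(f,y') \geq \dil(f,y)$ or $\dil(f,y') < \dil(f,y)$: in the first case apply $|f_y(v) - f_{y'}(v)| \leq \dil(f,y) \cdot d(y,y')$, in the second case use the sharper bound with $\dil(f,y')$ in place. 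In both situations the triangle inequality $d(y,y') \leq d(y,x) + d(x,y')$ combines with the weight term $\dil(f,y') \cdot d(x,y')$ so that the $d(x,y')$ contributions cancel, leaving precisely $\dil(f,y) \cdot d(x,y)$, as required.
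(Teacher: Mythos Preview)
Your proof is correct and follows essentially the same route as the paper's: the extension property, the global Lipschitz bound, and the reverse inequalities are handled identically, and your case split for the pointwise estimate $\dil(F,y)\leq\dil(f,y)$ is exactly what underlies the paper's single-line observation $d(f_y,f_{y'})\leq \dil(f,y)\,d(x,y)+\dil(f,y')\,d(x,y')$ (which one proves by bounding $d(f_y,f_{y'})$ by $\min(\dil(f,y),\dil(f,y'))\,d(y,y')$ and applying the triangle inequality). The only cosmetic difference is that the paper derives boundedness of $F_x$ as a byproduct of the global Lipschitz estimate rather than checking it up front.
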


The existence of an extension with the same dilatation as $f$ was shown by Gromov (see \cite{Gromov(1983)}, page 8). However, he used the extension 
\[ F'_x(v) := \inf_{y\in Y} (f_y(v) + \dil(f) \cdot d(x,y)), \]
which in general does not have the property $\dil(F,y)=\dil(f,y)$ for all $y\in Y$. Since in section \ref{filling_axioms} we will need that the dilatation with respect to points in $Y$ remains the same, we give a complete proof of Lemma \ref{univ_prop}.

\begin{proof}
A priori, $F$ is a map to the space of functions from $V$ to $[-\infty,\infty)$. 

First note that $F$ extends $f$, which can be seen as follows: for any $y\in Y$ one finds
\begin{align*}
0\leq f_y(v) - F_y(v) &= \sup_{y'\in Y} (f_y(v) - f_{y'}(v) - \dil(f,y')d(y,y')) \\
& \leq  \sup_{y'\in Y} (\dil(f,y')d(y,y')-\dil(f,y')d(y,y')) = 0
\end{align*}
because $\sup_{v\in V}|f_y(v)-f_{y'}(v)|= d(f_y,f_{y'}) \leq \dil(f,y') d(y,y')$. Hence $F_y\equiv f_y$.

Furthermore, one has
\begin{align*}
F_x(v)  &= \inf_{y\in Y} (f_y(v) + \dil(f,y) d(x,y)) \\
&\leq \inf_{y\in Y} (f_y(v) + \dil(f,y) d(x',y) + \dil(f,y) d(x,x')) \\
&\leq \inf_{y\in Y} (f_y(v) + \dil(f,y)d(x',y) + \dil(f) d(x,x')) \\
&= F_{x'}(v) + \dil(f) d(x,x')
\end{align*}
by the triangle inequality. This shows that $d(F_x,F_{x'})=\|F_x - F_{x'}\|\leq \dil(f)d(x,x')$. In particular $F_x$ is bounded for every $x\in X$ and $F$ is Lipschitz continous with $\dil(F)\leq\dil(f)$. The converse inequality between the dilatations is obvious since $F$ extends $f$.

Note that one always has $d(f_y,f_{y'}) \leq \dil(f,y)d(x,y) + \dil(f,y')d(x,y')$ for any $x\in X$. Hence
\begin{align*}
d(F_y,F_x) &= \sup_{v\in V} \left|\sup_{y'\in Y} (f_y(v) - f_{y'}(v) - \dil(f,y')d(x,y')) \right| \\
&\leq \dil(f,y)d(x,y),
\end{align*}
which proves the claim $\dil(F,y)=\dil(f,y)$.
\end{proof}

Now, assume $V$ to be a connected finite simplicial complex of dimension $n$. Let $\mathbb{K}$ denote $\mathbb{Z}$, $\mathbb{Q}$, or $\mathbb{Z}_2$.

\begin{defn}
Let $\iota:V\hookrightarrow X$ be a topological embedding into a metric space $X$. The \emph{$\mathbb{K}$-filling radius} of $\iota$ is defined as
\[ \FillRad_\mathbb{K}(\iota: V\hookrightarrow X) := \inf \{ r>0 | \iota_*: H_n(V;\mathbb{K})\to H_n(U_r(\iota V);\mathbb{K}) \mbox{ is zero}\}, \]
where $U_r(\iota V)$ denotes the $r$-neighborhood of the image $\iota V$ in $X$.
\end{defn}

To define the filling volume we need to specify a choice of volume for singular Lipschitz chains, i.\,e.\ singular chains whose simplices are Lipschitz continuous. Following Gromov (\cite{Gromov(1983)}, page 11 and section 4.1) we define the volume of a singular Lipschitz simplex $\sigma:\Delta^n\to X$ in a metric space $X$ by
\[ \Vol(\sigma) := \inf \{ \Vol(\Delta^n,g) | \sigma:(\Delta^n,g)\to X \mbox{ nonexpanding}\}, \]
where $g$ runs over all Riemannian metrics on $\Delta^n$. If $c=\sum_i r_i\sigma_i\in C_n(X;\mathbb{K})$ is a singular Lipschitz chain, its volume is defined as
\[ \Vol(c) := {\textstyle\sum_i}|r_i|\Vol(\sigma_i). \]
(In the case of $\mathbb{K}=\mathbb{Z}_2$ the `absolute value' $|r|$ is understood as zero for the trivial element $r=0$ and as one for $r\neq 0$.) For a Lipschitz cycle $z\in C_n (X;\mathbb{K})$ the \emph{$\mathbb{K}$-filling volume} is given by
\[ \FillVol_\mathbb{K}(z) := \inf_{\partial c=z} \Vol(c), \]
where the infimum is taken over all Lipschitz chains $c$ with boundary $\partial c=z$.

In case that $X$ is a Banach space, this definition corresponds to the `hyper-Euclidean volume' (see \cite{Gromov(1983)}, page 33). This choice of volume has the following nice property: let $(M,g)$ be a connected closed oriented Riemannian manifold of dimension $n\geq 2$ and let $W$ be an oriented $(n+1)$-dimensional manifold with boundary $\partial W=M$, for example $W=M\times [0,\infty)$. Consider Riemannian metrics $g'$ on $W$ such that their distance function restricted to the boundary is bounded from below by the distance function of $g$. Then the filling volume $\FillVol(M,g)$ defined below equals the infimum of the volumes $\Vol(W,g')$ of all such Riemannian metrics $g'$ on $W$ (\cite{Gromov(1983)}, Proposition 2.2.A). 

From now on, all singular simplices are assumed to be Lipschitz continuous.

The filling volume of an embedding $\iota:V\hookrightarrow X$ will be the filling volume of a canonical topdimensional homology class. If $V$ is a manifold, then the fundamental class provides such a class. In the case of simplicial complexes, we have to require the existence of a `fundamental class'.

\begin{defn}
A connected finite $n$-dimensional simplicial complex $V$ will be called \emph{$\mathbb{K}$-orientable} if $H_n(V;\mathbb{K})\cong\mathbb{K}$. As in the case of manifolds, $V$ is called \emph{$\mathbb{K}$-oriented} once a generator $[V]_\mathbb{K}\in H_n(V;\mathbb{K})$ is chosen. In case $\mathbb{K}=\mathbb{Q}$ we will always assume that $[V]_\mathbb{Q}$ lies in the integral lattice $H_n(V;\mathbb{Z})\subset H_n(V;\mathbb{Q})$.
\end{defn}

The last part of the definition simply says that for $\mathbb{K}=\mathbb{Q}$ one takes the integral fundamental class and allows filling cycles with rational coefficients.

\begin{defn}
Let $V$ be a connected finite $\mathbb{K}$-oriented simplicial complex. If $\iota:V\hookrightarrow X$ is a Lipschitz embedding into a metric space $X$, then one defines the \emph{$\mathbb{K}$-filling volume} of $\iota$ as
\[ \FillVol_\mathbb{K}(\iota:V\hookrightarrow X) := \FillVol_\mathbb{K}(\iota_*z) \]
where $z$ is a Lipschitz representative of $[V]_\mathbb{K}$.
\end{defn}

This is independent of the representing fundamental cycle. Namely, let $z$ and $z'$ be two Lipschitz representatives of $[V]_\mathbb{K}$. Then there exists a Lipschitz chain $b\in C_{n+1}(V;\mathbb{K})$ such that $z'=z+\partial b$ (with $n=\dim V$). Therefore it suffices to see that $\Vol(\iota_*b)=0$. But $\Vol(\iota_*b)\leq L^{n+1}\cdot\Vol(b)$ where $L>0$ is the Lipschitz constant of $\iota$ and $\Vol(b)=0$ since $b$ is a $(n+1)$-cycle in a $n$-dimensional complex.

For a compact metric space $(V,d)$ the \emph{Kuratowski embedding}
\begin{align*} 
\iota:V &\hookrightarrow L^\infty(V),\\ 
v &\mapsto d(v,\_)
\end{align*}
is an isometric embedding by the triangle inequality. 

Let $g$ be a continuous piecewise smooth Riemannian metric on a connected finite ($\mathbb{K}$-oriented) simplicial complex $V$. With the induced path metric, $V$ becomes a metric space. The associated Kuratowski embedding will be denoted by $\iota_g:V\hookrightarrow L^\infty(V)$. For this embedding the filling invariants are denoted by
\begin{align*}
\FillRad_\mathbb{K}(V,g) &:= \FillRad_\mathbb{K}(\iota_g) \;\;\;\mbox{and} \\
\FillVol_\mathbb{K}(V,g) &:= \FillVol_\mathbb{K}(\iota_g).
\end{align*}

\begin{defn}
We define the optimal filling ratios
\begin{align*}
\FR_\mathbb{K}(V) &:= \sup_g \frac{\FillRad_\mathbb{K}(V,g)^n}{\Vol(V,g)} \;\;\;\mbox{and} \\
\FV_\mathbb{K}(V) &:= \sup_g \frac{\FillVol_\mathbb{K}(V,g)^{n/(n+1)}}{\Vol(V,g)} .
\end{align*}
\end{defn}

These numbers are the smallest constants such that the filling inequalities
\begin{align*}
\FillRad_\mathbb{K}(V,g)^n &\leq A(V)\cdot\Vol(V,g) \;\;\;\mbox{and} \\
\FillVol_\mathbb{K}(V,g)^{n/(n+1)} &\leq B(V)\cdot\Vol(V,g)
\end{align*}
are satisfied for all Riemannian metrics $g$ on $V$. By Gromov's uiversal filling inequality one knows that for manifolds $M$ there are upper bounds for $\FR_\mathbb{K}(M)$ and $\FV_\mathbb{K}(M)$ depending only on the dimension. In particular, both filling ratios are finite for manifolds.


\section{Axioms for filling invariants}\label{filling_axioms}

The content of this section is the proof of certain properties of the filling ratios. We will show that $\FR$ and $\FV$ satisfy a comparison axiom and an extension axiom. Thereby a numerical invariant for simplicial complexes is said to fulfill a comparison axiom if any `degree one' map $V\to W$ implies an inequality between the values of this invariant for $V$ and for $W$. It has to be specified which maps are of `degree one' (e.\,g.\ $(n,1)$-monotone maps defined below) and often there are further assumptions on the maps, like surjectivity on fundamental groups or on some homology groups. (Examples can be found in \cite{Brunnbauer(2007a)} and below.)

An extension axiom is satisfied if attaching cells with dimension less than the dimension of the simplicial complex does not change the value of the considered invariant. Again, examples are provided by \cite{Brunnbauer(2007a)}.

The actual proof of Theorems \ref{main_thm_intro} and \ref{main_thm} uses only these two axioms and no other properties of the filling ratios. It will be given in section \ref{constancy}.

\begin{defn}
A simplicial map $f:V\to W$ between two $n$-dimensional simplicial complexes is called \emph{$(n,d)$-monotone} if the preimage of each open $n$-simplex of $W$ consists of at most $d$ open $n$-simplices in $V$.
\end{defn}

By a theorem of Hopf this is a generalization of the usual notion of degree for maps between manifolds (see \cite{Epstein(1966)}).

\begin{lem}[Comparison axiom for $\FR$ and $\FV$]\label{comp_axiom}
If $f:V\to W$ is an $(n,1)$-monotone map between connected finite ($\mathbb{K}$-oriented) simplicial complexes of dimension $n$ such that $f_*:H_n(V;\mathbb{K})\twoheadrightarrow H_n(W;\mathbb{K})$ is surjective, then
\[ \F_\mathbb{K}(V) \geq \F_\mathbb{K}(W) \]
for both $\F=\FR$ and $\F=\FV$.
\end{lem}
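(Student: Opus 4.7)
The plan is to push any Riemannian metric $g_W$ on $W$ back to a family of Riemannian metrics $g_\epsilon$ on $V$, parametrized by small $\epsilon > 0$, for which the map $f : (V, g_\epsilon) \to (W, g_W)$ is $1$-Lipschitz and $\Vol(V, g_\epsilon) \to \Vol(W, g_W)$ as $\epsilon \downarrow 0$. The $1$-Lipschitz property will transfer Kuratowski filling data from $V$ to $W$ through the universal extension of Lemma \ref{univ_prop}, and combining this with the volume convergence and taking suprema will yield the inequality.

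For the construction, I would fix any continuous piecewise smooth Riemannian metric $h$ on $V$ and set $g_\epsilon := f^* g_W + \epsilon h$, interpreted simplex by simplex as a quadratic form; this is positive definite because $h$ is, so $g_\epsilon$ is a continuous piecewise smooth Riemannian metric on $V$, and $f : (V, g_\epsilon) \to (W, g_W)$ is $1$-Lipschitz by construction. For the volume, the $(n,1)$-monotonicity combined with surjectivity of $f_*$ on top homology implies that every open $n$-simplex of $W$ is the image of exactly one open $n$-simplex of $V$ on which $f$ restricts to a simplicial homeomorphism (call these type (a)); any remaining $n$-simplex of $V$ (type (b)) is collapsed to a subcomplex of $W$ of dimension $< n$. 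On a type (a) simplex $\sigma$ one has $\Vol_{g_\epsilon}(\sigma) = \Vol_{g_W}(f(\sigma)) + O(\epsilon)$, while on a type (b) simplex, where $f^* g_W$ has pointwise rank $k < n$, a block-matrix determinant computation gives $\det(g_\epsilon) = O(\epsilon^{n-k})$, hence $\Vol_{g_\epsilon}(\sigma) = O(\epsilon^{1/2}) \to 0$. Summing over the finitely many $n$-simplices of $V$ yields $\Vol(V, g_\epsilon) \to \Vol(W, g_W)$.

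For the filling comparison, the composition $\iota_{g_W} \circ f : V \to L^\infty(W)$ is $1$-Lipschitz with respect to $g_\epsilon$, so Lemma \ref{univ_prop} applied to the isometric Kuratowski embedding $V \hookrightarrow L^\infty(V)$ extends it to a $1$-Lipschitz map $F : L^\infty(V) \to L^\infty(W)$ with $F \circ \iota_{g_\epsilon} = \iota_{g_W} \circ f$. Given any Lipschitz chain $c$ in $U_r(\iota_{g_\epsilon} V)$ whose boundary represents the Kuratowski fundamental cycle of $V$, the pushforward $F_* c$ lies in $U_r(\iota_{g_W} W)$, satisfies $\Vol(F_* c) \leq \Vol(c)$ since $\dil(F) = 1$, and has $\partial F_* c$ representing $(\iota_{g_W})_* f_* [V]_\mathbb{K} = \lambda \cdot (\iota_{g_W})_*[W]_\mathbb{K}$ for some nonzero $\lambda \in \mathbb{K}$ (a unit for $\mathbb{K} = \mathbb{Z}, \mathbb{Z}_2$; a nonzero integer for $\mathbb{K} = \mathbb{Q}$ under the convention of primitive integral fundamental classes). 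Consequently $\FillRad_\mathbb{K}(V, g_\epsilon) \geq \FillRad_\mathbb{K}(W, g_W)$ and $\FillVol_\mathbb{K}(V, g_\epsilon) \geq |\lambda| \FillVol_\mathbb{K}(W, g_W) \geq \FillVol_\mathbb{K}(W, g_W)$. Combining with the volume convergence and supping over $g_W$ gives $\F_\mathbb{K}(V) \geq \F_\mathbb{K}(W)$ for both $\F = \FR$ and $\F = \FV$.

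The main technical point is the volume accounting on type (b) simplices: the perturbation $\epsilon h$ must be chosen small enough that collapsed $n$-simplices contribute negligibly while still large enough to make $g_\epsilon$ a genuine Riemannian metric. The block-matrix degeneracy estimate above is exactly what makes this balance work, and it is where the $(n,1)$-monotonicity hypothesis is indispensable: without it, multiply-covered $n$-simplices of $W$ would over-count volume in the limit and destroy the desired convergence.
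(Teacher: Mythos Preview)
Your argument is correct and follows essentially the same route as the paper's proof: perturb $f^*g_W$ by a small multiple of an auxiliary metric on $V$ to get a genuine Riemannian metric for which $f$ is nonexpanding, extend $\iota_{g_W}\circ f$ to a nonexpanding map $L^\infty(V)\to L^\infty(W)$ via the universal property, and compare fillings. One minor overstatement: surjectivity of $f_*$ on $H_n$ together with $(n,1)$-monotonicity does \emph{not} force every open $n$-simplex of $W$ to be hit (think of $W=S^n\vee D^n$ with $f$ the inclusion of the sphere), but this is harmless since you only need $\limsup_{\epsilon\to 0}\Vol(V,g_\epsilon)\le\Vol(W,g_W)$, which your type~(a)/type~(b) estimate already gives.
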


\begin{proof}
Let $g_2$ be a Riemannian metric on $W$. Choose a Riemannian metric $g_1$ on $V$ and set $g_1^t:=f^*g_2 + t^2 g_1$. This is a Riemannian metric on $V$. One may choose $t>0$ so small that
\[ \Vol(V,g_1^t) \leq \Vol(W,g_2)+\varepsilon \]
for a given $\varepsilon>0$. Denote the corresponding Kuratowski embeddings by $\iota_1^t$ and $\iota_2$. 

Since $f:(V,g_1^t)\to (W,g_2)$ is nonexpanding, there is a nonexpanding map $F:L^\infty(V)\to L^\infty(W)$ that extends $\iota_2\circ f$ by the universal property of $L^\infty(W)$. (Think of $V\subset L^\infty(V)$ via $\iota_1^t$.) Thus there is a commutative diagram
\[\xymatrix{
V \ar[r]^-f \ar@{^{(}->}[d]_-{\iota_1^t} & W \ar@{^{(}->}[d]^-{\iota_2} \\
L^\infty(V) \ar[r]^-F & L^\infty(W)
}\]
which gives for every $r>0$ 
\[\xymatrix{
H_n(V;\mathbb{K}) \ar@{->>}[r]^-{f_*} \ar[d]_-{(\iota_1^t)_*} & H_n (W,\mathbb{K}) \ar[d]^-{(\iota_2)_*} \\
H_n (U_r(\iota_1^t V);\mathbb{K}) \ar[r]^-{F_*} & H_n (U_r(\iota_2 W);\mathbb{K})
}\]
Therefore $\FillRad_\mathbb{K}(V,g_1^t) \geq \FillRad_\mathbb{K}(W,g_2)$ and this finally gives $\FR_\mathbb{K}(V)\geq \FR_\mathbb{K}(W)$.

Let $z\in C_n(V;\mathbb{K})$ represent $[V]_\mathbb{K}$. Then $f_*z$ represents $f_*[V]_\mathbb{K}=\pm [W]_\mathbb{K}$ (in case $\mathbb{K}=\mathbb{Q}$ look at the local degree) and one finds
\[ \FillVol_\mathbb{K}((\iota_2)_*(f_*z)) = \FillVol_\mathbb{K}(F_*((\iota_1^t)_* z)) \leq \FillVol_\mathbb{K}((\iota_1^t)_* z). \]
Hence $\FillVol_\mathbb{K}(W,g_2)\leq \FillVol_\mathbb{K}(V,g_1^t)$ and $\FV_\mathbb{K}(W)\leq \FV_\mathbb{K}(V)$.
\end{proof}

The proof of the extension axiom is more complicated. We will frequently use the following fact, which is a direct consequence of the universal property (Lemma \ref{univ_prop}).
\begin{cor}\label{no_matter}
If $i:(V,g)\hookrightarrow L^\infty(W)$ is an isometric embedding with $W$ any set, then
\begin{align*}
\FillRad_\mathbb{K}(i) &= \FillRad_\mathbb{K}(V,g) \;\;\;\mbox{and}\\
\FillVol_\mathbb{K}(i) &= \FillVol_\mathbb{K}(V,g).
\end{align*}
\end{cor}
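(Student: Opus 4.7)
The plan is to use the universal property (Lemma \ref{univ_prop}) in both directions to construct nonexpanding maps between $L^\infty(V)$ and $L^\infty(W)$ that interchange the embeddings $\iota_g$ and $i$, and then show that filling chains (resp.\ filling neighborhoods) on one side can be pushed to the other without increasing volume (resp.\ radius).

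First I would observe that both $\iota_g:V\hookrightarrow L^\infty(V)$ and $i:V\hookrightarrow L^\infty(W)$ are isometric embeddings of $(V,g)$ (with the induced path metric). Hence the composition $\iota_g\circ i^{-1}:i(V)\to L^\infty(V)$ is an isometry between $i(V)$ and $\iota_g(V)$, in particular a $1$-Lipschitz map. Applying Lemma \ref{univ_prop} with $Y=i(V)\subset X=L^\infty(W)$ yields a nonexpanding extension
\[ F:L^\infty(W)\to L^\infty(V), \qquad F\circ i=\iota_g. \]
Symmetrically, applying Lemma \ref{univ_prop} to $i\circ\iota_g^{-1}:\iota_g(V)\to L^\infty(W)$ gives a nonexpanding map
\[ G:L^\infty(V)\to L^\infty(W), \qquad G\circ\iota_g=i. \]

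For the filling radius, I would use that a nonexpanding map sends an $r$-neighborhood of a set into the $r$-neighborhood of its image. If $r>\FillRad_\mathbb{K}(\iota_g)$, then $(\iota_g)_*[V]_\mathbb{K}=0$ in $H_n(U_r(\iota_g V);\mathbb{K})$; applying $G_*$ and using $G\circ\iota_g=i$ together with $G(U_r(\iota_g V))\subset U_r(iV)$ gives $i_*[V]_\mathbb{K}=0$ in $H_n(U_r(iV);\mathbb{K})$. Hence $\FillRad_\mathbb{K}(i)\leq\FillRad_\mathbb{K}(\iota_g)$. The analogous argument with $F$ gives the reverse inequality.

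For the filling volume, the argument is essentially the same but on the level of chains. If $z$ is a Lipschitz representative of $[V]_\mathbb{K}$ and $c$ is a Lipschitz chain in $L^\infty(V)$ with $\partial c=(\iota_g)_*z$, then $G_*c$ is a Lipschitz chain in $L^\infty(W)$ with $\partial G_*c=G_*(\iota_g)_*z=i_*z$. Since $G$ is nonexpanding, each singular simplex $\sigma$ satisfies $\Vol(G\circ\sigma)\leq\Vol(\sigma)$ by the very definition of the volume of a Lipschitz simplex (a nonexpanding parameterization of $\sigma$ remains nonexpanding after postcomposition with $G$), so $\Vol(G_*c)\leq\Vol(c)$. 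Taking the infimum over $c$ yields $\FillVol_\mathbb{K}(i)\leq\FillVol_\mathbb{K}(\iota_g)$, and the other inequality follows using $F$.

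The main subtlety — and the reason the author insisted on the sharper form of the universal property in Lemma \ref{univ_prop} — is not needed here: for this corollary we only use that the extensions $F$ and $G$ are globally nonexpanding, which is the standard part of the universal property. The argument is therefore a routine application of functoriality once the two nonexpanding maps are in place.
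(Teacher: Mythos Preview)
Your argument is correct and is exactly the reasoning the paper intends: the corollary is stated there as ``a direct consequence of the universal property (Lemma~\ref{univ_prop})'' without further details, and what you wrote is the standard unpacking of that remark. Your closing observation is also right---only the global nonexpanding part of Lemma~\ref{univ_prop} is used here, whereas the refined pointwise statement $\dil(F,y)=\dil(f,y)$ is needed later in the proof of Proposition~\ref{ext_FR}.
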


We first investigate the extension axiom for the filling radius:

\begin{prop}[Extension axiom for $\FR$]\label{ext_FR}
Let $V'$ be an extension of $V$, i.\,e.\ $V'$ is obtained from $V$ by attaching finitely many cells of dimension less than $n:=\dim V$. Then
\[ \FR_\mathbb{K}(V') = \FR_\mathbb{K}(V). \]
\end{prop}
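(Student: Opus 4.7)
The plan is to establish equality by proving both inequalities $\FR_\mathbb{K}(V') \le \FR_\mathbb{K}(V)$ and $\FR_\mathbb{K}(V') \ge \FR_\mathbb{K}(V)$. Two topological inputs will be used throughout: since every attached cell has dimension strictly less than $n$, the inclusion $i\colon V \hookrightarrow V'$ induces an isomorphism $i_*\colon H_n(V;\mathbb{K}) \xrightarrow{\cong} H_n(V';\mathbb{K})$ taking $[V]_\mathbb{K}$ to $\pm[V']_\mathbb{K}$; and for any Riemannian metric $g'$ on $V'$ the restriction $g := g'|_V$ satisfies $\Vol(V',g') = \Vol(V,g)$, because attached cells contribute no $n$-volume.

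For the inequality $\FR_\mathbb{K}(V') \le \FR_\mathbb{K}(V)$, I take any $g'$ on $V'$ and set $g := g'|_V$. The map $\iota_{g'}|_V\colon (V,d_V) \to L^\infty(V')$ is $1$-Lipschitz because any path in $V$ is a path in $V'$, so $d_{V'}(v_1,v_2) \le d_V(v_1,v_2)$. By Lemma~\ref{univ_prop} it extends to a nonexpanding $F\colon L^\infty(V) \to L^\infty(V')$. For $r \ge \FillRad_\mathbb{K}(V,g)$, a nullhomology of $(\iota_g)_*[V]$ in $U_r(\iota_g V)$ pushes under $F$ to a nullhomology of $F_*(\iota_g)_*[V] = (\iota_{g'})_* i_*[V] = \pm(\iota_{g'})_*[V']$ lying in $F(U_r(\iota_g V)) \subset U_r(\iota_{g'}V')$, so $\FillRad_\mathbb{K}(V',g') \le r$. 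Combined with equal volumes this yields the inequality.

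For the reverse inequality $\FR_\mathbb{K}(V') \ge \FR_\mathbb{K}(V)$, given $g$ on $V$ and $\varepsilon > 0$ I would construct an extension $g'$ of $g$ to $V'$ with two properties: (a) $V \hookrightarrow (V',g')$ is isometric as path-metric spaces, equivalently the attaching map of each cell $\phi\colon (\partial D^k,d_{D^k}) \to (V,d_V)$ is nonexpanding, so that there are no shortcuts through $D^k$; and (b) each attached cell $D^k$ has $d_{V'}$-diameter less than $\varepsilon$. Property (a) allows us to regard $V \subset V'$ as a metric subspace, so that Lemma~\ref{univ_prop} extends $\iota_g\colon (V,d_V) \to L^\infty(V)$ to a nonexpanding $\Psi\colon V' \to L^\infty(V)$ with $\Psi|_V = \iota_g$; its explicit formula $\Psi(v')(w) = \inf_{y \in V}(d_V(y,w) + d_{V'}(v',y))$ combined with (b) gives $\|\Psi(v') - \iota_g(v_0)\|_\infty \le d_{V'}(v_0,v')$ for every $v_0 \in V$, whence $\Psi(V') \subset U_\varepsilon(\iota_g V)$. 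A further nonexpanding extension $\hat\Psi\colon L^\infty(V') \to L^\infty(V)$ maps any nullhomology of $(\iota_{g'})_*[V']$ in $U_r(\iota_{g'}V')$ to a nullhomology of $\hat\Psi_*(\iota_{g'})_*[V'] = (\iota_g)_*[V]$ in $U_{r+\varepsilon}(\iota_g V)$, yielding $\FillRad_\mathbb{K}(V,g) \le \FillRad_\mathbb{K}(V',g') + \varepsilon$ and, together with equal volumes, the desired ratio inequality.

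The main obstacle is that (a) and (b) are in tension: (a) forces $\diam(\partial D^k,d_{D^k}) \ge \diam(\phi(\partial D^k),d_V)$, while (b) forces $\diam(\partial D^k,d_{D^k}) < \varepsilon$, so both conditions simultaneously require $\diam(\phi(\partial D^k),d_V) < \varepsilon$, a condition on the ambient metric $g$ that need not hold. I would resolve this by first replacing $g$ by a nearby metric $\tilde g$ on $V$ with $\diam(\phi(\partial D^k),d_{\tilde g}) < \varepsilon$ for every attached cell, via a controlled rescaling of $g$ in a thin tubular neighborhood $T$ of $\bigcup_k \phi(\partial D^k) \subset V$. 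Because $\bigcup_k \phi(\partial D^k)$ has dimension at most $n-1$ and hence zero $n$-volume, for sufficiently thin $T$ the rescaling changes both $\Vol(V,g)$ and $\FillRad_\mathbb{K}(V,g)$ by arbitrarily small amounts, so that the ratio of $\tilde g$ approximates that of $g$ to any desired precision. Applying the construction to $\tilde g$, taking the supremum over $g$, and letting $\varepsilon \to 0$ completes the proof.
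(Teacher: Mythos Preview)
Your first inequality $\FR_\mathbb{K}(V')\le\FR_\mathbb{K}(V)$ is fine and coincides with the paper's use of the comparison axiom.

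The second inequality contains a genuine gap. The crucial unsupported step is the assertion that ``for sufficiently thin $T$ the rescaling changes $\FillRad_\mathbb{K}(V,g)$ by arbitrarily small amounts''. Your justification---that $\bigcup_k\phi(\partial D^k)$ has zero $n$-volume---controls the volume perturbation but says nothing about the filling radius, which depends on the global distance function rather than on $n$-dimensional measure. Shrinking the metric on a tube $T$, even a thin one, by the large factor $\lambda\approx\varepsilon/\diam_g(\phi(\partial D^k))$ needed for (b) creates shortcuts: any two points of $V$ that are close to $T$ become $\varepsilon$-close in $\tilde g$ regardless of their original distance. The resulting metric space $(V,d_{\tilde g})$ can differ drastically from $(V,d_g)$, and there is no general continuity principle for $\FillRad$ under such deformations. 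Moreover, for $k=1$ the construction does not even achieve (b): a tubular neighborhood of $\phi(S^0)=\{\phi(-1),\phi(1)\}$ is a pair of disjoint balls, and rescaling inside them does not reduce $d_{\tilde g}(\phi(-1),\phi(1))$.

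The paper resolves the tension between (a) and (b) in the opposite direction: instead of making the attached cell small, it makes the cell \emph{large}, attaching it via a long cylinder of length proportional to a radius $R\gg\FillRad_\mathbb{K}(V,g)$. This guarantees the strong isometric embedding (a) for free, but then one must show that the cell, although large, does not help to fill $[V]$. That requires a substantially more delicate argument: a smooth approximation of the Kuratowski embedding on the cell, a tubular-neighborhood retraction in $L^\infty(V')$, and a Mayer--Vietoris computation, together with the refined extension Lemma~\ref{univ_prop} controlling $\dil(F,y)$ pointwise. Your proposed shortcut via small cells does not appear to be repairable without an independent stability result for $\FillRad$ that is at least as hard as the proposition itself.
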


\begin{proof}
Since the inclusion $i:V\hookrightarrow V'$ is $(n,1)$-monotone and induces an isomorphism 
\[ i_*:H_n (V;\mathbb{K})\xrightarrow{\cong} H_n (V';\mathbb{K}), \]
the comparison axiom implies $\FR_\mathbb{K}(V) \geq \FR_\mathbb{K}(V')$.

For the converse inequality it suffices by induction to attach one $k$-cell at a time, $k<n$. Let $h:S^{k-1}\to V$ be the (simplicial) attaching map and let $g$ be a Riemannian metric on $V$. 

Consider all $R>0$ such that $h:(S^{k-1},g_R)\to (V,g)$ is nonexpanding, where $g_R$ denotes the round metric with radius $R$ on $S^{k-1}$. (In the case $k=1$ choose $R>0$ such that $23\pi R\geq d(h(-1),h(1))$.) Define Riemannian metrics $g'_R$ on $V'$ by thinking of $V'$ as
\[ V\;\;\;\cup_h \;\;\; (S^{k-1}\times[-1,0]) \;\;\;\cup\;\;\; (S^{k-1}\times [0,6]) \;\;\;\cup\;\;\; D^k \]
and taking
\[ g,\;\;\; (-sh^*g +(1+s)g_R)\oplus \pi R ds^2,\;\;\; g_R\oplus \pi R ds^2,\;\;\; g_R \]
on the respective parts. Here the last $g_R$ denotes the round metric of radius $R$ on the $k$-dimensional hemisphere. (For $k=1$ use $5\pi Rds^2$ on $S^0\times[-1,0]$.) Then the induced distance functions of $g$ and $g'_R$ coincide on $V$:
\[ d_{g'_R}|V \equiv d_g, \]
i.\,e.\ the inclusion $i:(V,g)\hookrightarrow (V',g'_R)$ is isometric in the strong sense. Hence by Corollary \ref{no_matter} we have
\[ \FillRad_\mathbb{K}(V,g) = \FillRad_\mathbb{K} (\iota_{g'_R}\circ i) \]
and therefore
\[ \FillRad_\mathbb{K}(V',g'_R)\leq\FillRad_\mathbb{K}(V,g) \]
since for any $r>0$ the $r$-neighborhood of $V'$ is larger than the one of $V$. Furthermore, note that the inclusion $((S^{k-1}\times[1,6])\cup D^k,g'_R)\subset (V',g'_R)$ is strongly isometric, too. We will write $V'_R:=\iota_{g'_R}V', V_R:=\iota_{g'_R}V$ and so on.

Next, we restrict our attention to radii $R>\FillRad_\mathbb{K}(V,g)$. This guarantees that the $r$-neighborhood of $((S^{k-1}\times[1,6])\cup D^k)_R$ does not meet the $r$-neighborhood of $V_R$ for any $r<R$. Thus, we may think of $U_r(((S^{k-1}\times[1,6])\cup D^k)_R)$ as some kind of `tubular neighborhood' and try to retract it to its core. This core is $k$-dimensional and plays therefore no role for $n$th homology. Hence, if $H_n(V;\mathbb{K})$ vanishes in $U_r(V'_R)$, then also in $U_r(V_R)$. We now concretize this idea.

Choose one of the radii $R$ with $R>\FillRad_\mathbb{K}(V,g)$ and, additionally, with $R>1$ as reference radius and call it $R_0$. Since the Kuratowski embedding $\iota_0 := \iota_{g'_{R_0}}$ is not differentiable on the attached $k$-cell, we need to choose a smooth approximation to get a tubular neighborhood. Therefore, let 
\[ \iota: (S^{k-1}\times [3+\delta/\pi R_0,6])\cup D^k \hookrightarrow L^\infty(V') \]
be a smooth embedding such that 
\[ d(\iota(x), \iota_0(x)) < \delta \]
for all $x\in (S^{k-1}\times [3+\delta/\pi R_0,6])\cup D^k$. Using the Kuratowski embedding $\iota_0$ on $V'\setminus ((S^{k-1}\times [3,6])\cup D^k)$ and linear interpolation on $S^{k-1}\times [3,3+\delta/\pi R_0]$, this defines a Lipschitz map $\iota:(V',g'_{R_0})\to L^\infty(V')$ which is $3\delta$-close to $\iota_0$. Denote by $K:=\dil(\iota)$ its Lipschitz constant and think of $\iota$ as a map $V'_{R_0}\to L^\infty(V')$.

With respect to $y\in V'_{R_0}\setminus ((S^{k-1}\times [2,6])\cup D^k)_{R_0}$ the dilatation $\dil(\iota,y)$ of $\iota$ is at most $1+\delta$. This holds because
\begin{align*}
\frac{d(\iota(y),\iota(y'))}{d(y,y')} &\leq \frac{d(y,y')+d(y',\iota(y'))}{d(y,y')} \\
&\leq 1+\frac{3\delta}{d(y,y')} \leq 1+\delta 
\end{align*}
for every $y'\in ((S^{k-1}\times [3,6])\cup D^k)_{R_0}$ and because $\iota$ is isometric on $V'_{R_0}\setminus ((S^{k-1}\times [3,6])\cup D^k)_{R_0}$.

Let $F:L^\infty(V')\to L^\infty(V')$ be an extension of $\iota:V'_{R_0}\to L^\infty(V')$ as in Lemma \ref{univ_prop}. Then
\[ F(U_r(V'_{R_0}\setminus ((S^{k-1}\times [2,6])\cup D^k)_{R_0})) \subset U_{r(1+\delta)}(V'_{R_0}\setminus ((S^{k-1}\times [2,6])\cup D^k)_{R_0}) \]
by the fact that $\dil(F,y)=\dil(\iota,y)\leq 1+\delta$ for all $y\in V'_{R_0}\setminus ((S^{k-1}\times [2,6])\cup D^k)_{R_0}$.

Denote $E:=\iota((S^{k-1}\times [3+\delta/\pi R_0,6])\cup D^k)$. Let $\nu(E)\to E$ be its normal bundle and let $\tau:\mathcal{O}\hookrightarrow L^\infty(V')$ be a tubular neighborhood for the trivial spray (i.\,e.\ the exponential map is given by vector addition) where $\mathcal{O}\subset \nu(E)$ is open, fiberwise starshaped with respect to the zero section (thus it allows a deformation retraction to the zero section), and fiberwise bounded by $\FillRad_\mathbb{K}(V,g)/2$. Furthermore, assume that
\[ \tau(\mathcal{O}|\iota((S^{k-1}\times[3{\textstyle\frac{1}{2}},6])\cup D^k)) \subset U_{r_0}(((S^{k-1}\times[2,6])\cup D^k)_{R_0}) \]
with $r_0=\FillRad_\mathbb{K}(V',g'_{R_0})$. By compactness there is an $\varepsilon>0$ such that 
\[ F(U_\varepsilon(((S^{k-1}\times[4,6])\cup D^k)_{R_0})) \subset \tau(\mathcal{O}|\iota((S^{k-1}\times[3{\textstyle\frac{1}{2}},6])\cup D^k)). \]
Moreover, choosing $\varepsilon < (r_0-3\delta)/K$ guarantees that
\[ F(U_\varepsilon(((S^{k-1}\times [2,6])\cup D^k)_{R_0})) \subset U_{r_0}(V'_{R_0}). \]

\begin{claim*}
There is an $R\geq R_0$ such that $H_n(V;\mathbb{K})$ vanishes for all $r>\FillRad_\mathbb{K}(V',g'_R)$ in
\[ U_r(V'_{R_0}\setminus ((S^{k-1}\times[2,6])\cup D^k)_{R_0}) \;\cup\; U_\varepsilon (((S^{k-1}\times[2,6])\cup D^k)_{R_0}). \]
\end{claim*}

\begin{proof}[Proof of the claim]
Choose a positive real number $C\leq 1$ with $C\pi\FillRad_\mathbb{K}(V,g)<\varepsilon$ and choose 
\[ R \geq \max( R_0/C,\diam(V'_{R_0})/C\pi). \] 

The identity on $V'$ gives a nonexpanding homeomorphism $f:V'_R\to V'_{R_0}$. Let $\tilde F:L^\infty(V')\to L^\infty(V')$ be an extension of $f$ as in Lemma \ref{univ_prop}. Then for any $\FillRad_\mathbb{K}(V',g'_R)< r \leq\pi\FillRad_\mathbb{K}(V,g)$ one finds
\[ \tilde F(U_r(((S^{k-1}\times [2,6])\cup D^k)_R)) \subset U_\varepsilon(((S^{k-1}\times [2,6])\cup D^k)_{R_0}) \]
since $\dil(\tilde F,y)=\dil(f,y)\leq C$ for any $y\in ((S^{k-1}\times [2,6])\cup D^k)_R$. This follows from
\[ \frac{d(f(y),f(y'))}{d(y,y')} \leq C \]
which holds for all $y'\in V'_R$ since on $((S^{k-1}\times[1,6])\cup D^k)_R$ the map $f$ is the contraction by the factor $R_0/R \leq C$ and for the other $y'$ note that the numerator is bounded from above by $\diam(V'_{R_0})$ and the denominator is bounded from below by $\pi R$. 

Hence $\tilde F$ maps $U_r(V'_R)$ to 
\[ U_r(V'_{R_0}\setminus ((S^{k-1}\times[2,6])\cup D^k)_{R_0}) \cup U_\varepsilon (((S^{k-1}\times[2,6])\cup D^k)_{R_0}). \]
Therefore $H_n(V;\mathbb{K})$ vanishes therein and the claim is proved.
\end{proof}

Note that $\FillRad_\mathbb{K}(V',g'_R)\geq \FillRad_\mathbb{K}(V',g'_{R_0})=r_0$ by the universal property. Therefore, applying $F$ shows that $H_n(V;\mathbb{K})$ also vanishes in
\[ U_{r(1+\delta)}(V'_{R_0}\setminus ((S^{k-1}\times [4,6])\cup D^k)_{R_0}) \cup \tau(\mathcal{O}|\iota((S^{k-1}\times[3{\textstyle\frac{1}{2}},6])\cup D^k)) \]
for all $\delta>0$ and $r>\FillRad_\mathbb{K}(V',g'_R)$. Using the tubular neighborhood retraction (this is where the fiberwise bound on $\mathcal{O}$ comes in) and a Mayer-Vietoris argument one sees that $H_n(V;\mathbb{K})$ maps to zero in
\[ U_{r(1+\delta)}(V'_{R_0}\setminus \mathring{D}^k_{R_0}). \]
Since this holds for all $\delta>0$ and $r>\FillRad_\mathbb{K}(V',g'_R)$ it follows that
\[ \FillRad_\mathbb{K}(V'\setminus\mathring{D}^k,g'_{R_0}) \leq \FillRad_\mathbb{K}(V',g'_R). \]

The retraction $(V'\setminus \mathring{D}^k,g'_{R_0})\to (V,g)$ is nonexpanding by the choice of $g'_{R_0}$, therefore
\[ \FillRad_\mathbb{K}(V,g) \leq \FillRad_\mathbb{K}(V'\setminus \mathring{D}^k,g'_{R_0}) \leq \FillRad_\mathbb{K}(V',g'_R). \]
Note that we have actually proved that $\FillRad_\mathbb{K}(V,g)=\FillRad_\mathbb{K}(V',g'_R)$.

Since $\Vol(V,g)=\Vol(V',g'_R)$ one gets $\FR_\mathbb{K}(V)\leq \FR_\mathbb{K}(V')$. This finishes the proof of Proposition \ref{ext_FR}.
\end{proof}

To finish this section, we will proof the extension axiom for $\FV$. Note that an extension of a $\mathbb{K}$-oriented simplicial complex is again $\mathbb{K}$-oriented and has the same fundamental class.

\begin{lem}[Extension axiom for $\FV$]\label{ext_FV}
Let $V'$ be an extension of $V$. Then
\[ \FV_\mathbb{K} (V') = \FV_\mathbb{K}(V). \]
\end{lem}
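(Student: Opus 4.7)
The plan is to mirror the proof of Proposition~\ref{ext_FR} but to stop much earlier: for the filling \emph{volume} the tubular-neighborhood machinery is unnecessary, because the filling volume of a cycle depends only on the cycle and the ambient Banach space, not on which subcomplex one thinks of as its source.

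First I would dispose of the easy direction $\FV_\mathbb{K}(V)\geq\FV_\mathbb{K}(V')$ via the comparison axiom (Lemma~\ref{comp_axiom}) applied to the inclusion $i\colon V\hookrightarrow V'$. Since the attached cells have dimension less than $n$, the map $i$ is $(n,1)$-monotone and induces an isomorphism $i_*\colon H_n(V;\mathbb{K})\xrightarrow{\cong}H_n(V';\mathbb{K})$, so both hypotheses of the comparison axiom are met.

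For the reverse inequality I would reduce inductively to attaching a single $k$-cell with $k<n$, and then reuse verbatim the piecewise smooth extension metrics $g'_R$ on $V'$ that were constructed in the proof of Proposition~\ref{ext_FR}. Two features of that construction are decisive: (i)~$\Vol(V',g'_R)=\Vol(V,g)$, because every stratum outside $V$ has dimension at most $k<n$ and so contributes nothing to the $n$-volume; (ii)~for $R$ sufficiently large the inclusion $i\colon(V,g)\hookrightarrow(V',g'_R)$ is strongly isometric, so that $\iota_{g'_R}\circ i$ is an isometric embedding of $(V,g)$ into $L^\infty(V')$. Choosing $z$ a Lipschitz representative of $[V]_\mathbb{K}$, so that $i_*z$ represents $[V']_\mathbb{K}$, I would then chain the equalities
\[ \FillVol_\mathbb{K}(V',g'_R)=\FillVol_\mathbb{K}\bigl((\iota_{g'_R})_*(i_*z)\bigr)=\FillVol_\mathbb{K}\bigl((\iota_{g'_R}\circ i)_*z\bigr)=\FillVol_\mathbb{K}(V,g), \]
where the last equality is Corollary~\ref{no_matter} applied to the isometric embedding $\iota_{g'_R}\circ i$. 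Dividing by the equal volumes and taking a supremum over $g$ would yield $\FV_\mathbb{K}(V')\geq\FV_\mathbb{K}(V)$.

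I do not anticipate any serious obstacle. The structural asymmetry with Proposition~\ref{ext_FR} is that the filling radius requires a homology class to die in a neighborhood of a \emph{named} subspace, so neighborhoods of $V$ and of $V'$ must be bridged --- which was exactly the purpose of the tubular-neighborhood retraction there. No such bridging is needed here: any Lipschitz filling chain in $L^\infty(V')$ serves both $V$ and $V'$ equally, and Corollary~\ref{no_matter} transports the filling-volume infimum between them in one step. The only housekeeping point is, in the case $\mathbb{K}=\mathbb{Q}$, the integrality of $[V']_\mathbb{Q}$, which is automatic since $i_*$ is an isomorphism carrying the integral lattice of $V$ onto that of $V'$.
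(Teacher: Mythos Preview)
Your proposal is correct and follows essentially the same approach as the paper: use the comparison axiom for $\FV_\mathbb{K}(V)\geq\FV_\mathbb{K}(V')$, then for the reverse direction extend $g$ to a metric $g'$ on $V'$ making $i$ strongly isometric, invoke Corollary~\ref{no_matter} for $\iota_{g'}\circ i$, and use $\Vol(V,g)=\Vol(V',g')$. The paper is slightly more streamlined in that it does not reduce inductively to a single cell and simply cites the construction of Proposition~\ref{ext_FR} for the existence of such a $g'$ (any admissible $R$ already gives a strongly isometric inclusion, not only large $R$), but the substance is identical.
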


\begin{proof}
Since the inclusion $i:V\hookrightarrow V'$ is $(n,1)$-monotone and induces an isomorphism in the $n$-th homology, the inequality $\FV_\mathbb{K} (V) \geq \FV_\mathbb{K}(V')$ holds by the comparison axiom. 

Let $g$ be a Riemannian metric on $V$ and extend it over $V'$ such that the inclusion $i:V\hookrightarrow V'$ is isometric in the strong sense (see the proof of Proposition \ref{ext_FR}). Call this Riemannian metric $g'$. Then $\iota_{g'}\circ i:V\hookrightarrow L^\infty(V')$ is an isometric embedding and $\FillVol_\mathbb{K}(V,g) = \FillVol_\mathbb{K}(\iota_{g'}\circ i)$ by Corollary \ref{no_matter}. Choose a Lipschitz chain $z\in C_n(V;\mathbb{K})$ that represents $[V]_\mathbb{K}$. Then $i_*z$ is a Lipschitz chain that represents $[V']_\mathbb{K}$ and 
\[ \FillVol_\mathbb{K}(V,g) = \FillVol_\mathbb{K}(\iota_{g'*}(i_*z)) = \FillVol_\mathbb{K}(V',g'). \]
Since $\Vol(V,g)=\Vol(V',g')$ it follows that $\FV_\mathbb{K}(V)\leq \FV_\mathbb{K}(V')$. 
\end{proof}


\section{Constancy of the filling invariants}\label{constancy}

In this section we will prove our main theorem, which states that $\FR$ and $\FV$ depend only on dimension and orientability. 

\begin{thm}\label{main_thm}
Let $M$ and $N$ be two connected closed manifolds of dimension $n\geq 3$. If either both are orientable or both are nonorientable, then
\[ \F_\mathbb{K}(M) = \F_\mathbb{K}(N) \]
for both $\F=\FR$ and $\F=\FV$. If only $M$ is orientable, then
\[ \F_{\mathbb{Z}_2}(M) \leq \F_{\mathbb{Z}_2}(N). \]
\end{thm}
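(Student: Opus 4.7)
The plan is to combine the comparison axiom (Lemma \ref{comp_axiom}) with the extension axioms (Proposition \ref{ext_FR} and Lemma \ref{ext_FV}) exactly as outlined in the introduction. The theorem reduces to the purely topological assertion that whenever $M$ and $N$ are closed connected $n$-manifolds of matching orientation behaviour, there exists an extension $X$ of $N$ together with an $(n,1)$-monotone simplicial map $f\colon M \to X$ that is surjective on $H_n(\cdot;\mathbb{K})$. Granting this, the comparison axiom gives $\F_\mathbb{K}(M) \geq \F_\mathbb{K}(X)$, the appropriate extension axiom gives $\F_\mathbb{K}(X) = \F_\mathbb{K}(N)$, and interchanging the roles of $M$ and $N$ yields the two-sided equality. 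For the final inequality I would run the argument in one direction only, with $\mathbb{K} = \mathbb{Z}_2$, $M$ orientable, and $N$ nonorientable; since every closed $n$-manifold is $\mathbb{Z}_2$-orientable, the same construction still applies in this direction.

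To produce $X$ and $f$ in the orientable case, I would carry out the Hurewicz-type construction suggested by the introduction. Starting from a triangulation of $N$, inductively attach finitely many cells of dimensions $2,3,\ldots,n-1$ along generators of $\pi_1(N),\pi_2,\ldots$; $\pi_1$ of a closed manifold is finitely presented, and once the first stage has rendered the space simply connected Serre's theorem guarantees that each higher $\pi_k$ of a finite simply-connected complex is finitely generated, so only finitely many cells are attached at each stage. Attaching cells of dimension at most $n-1$ leaves $H_n$ unchanged, so the finite extension $X$ satisfies $H_n(X;\mathbb{Z}) \cong \mathbb{Z}$ with the class of $N$ as generator. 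A continuous degree-one map $f\colon M \to X$ is then built by obstruction theory: the high connectivity of $X$ kills the lower-dimensional obstructions, and the top-dimensional obstruction is handled by a cell-rearrangement argument, which is where the general theorem of \cite{Brunnbauer(2007a)} is cleanest to invoke. The nonorientable case is analogous: first attach $2$-cells along generators of the kernel of the orientation character $\pi_1(N) \to \mathbb{Z}_2$, reducing $\pi_1$ to $\mathbb{Z}_2$, then kill the higher $\pi_k$ modulo this $\mathbb{Z}_2$ as before. The mixed case of the final inequality uses the simply connected extension of $M$ from the orientable case directly; since that extension is simply connected and $\mathbb{Z}_2$-oriented, obstruction theory supplies a $\mathbb{Z}_2$-degree-one map from $N$ without any additional difficulty.

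Once the continuous degree-one map is in hand, I would pass to a sufficiently fine iterated barycentric subdivision of $M$, apply simplicial approximation to homotope $f$ to a simplicial map, and then invoke the theorem of Hopf in the form of Epstein \cite{Epstein(1966)} to cancel pairs of top-dimensional preimages of opposite local degree and obtain an $(n,1)$-monotone simplicial map. That $X$ is a CW complex rather than a manifold is no obstacle, since the $(n,1)$-monotonicity condition concerns only the top-dimensional simplices of $X$, which are inherited from $N$ and where the usual local-degree theory applies.

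I expect the principal obstacle to lie in the construction of the degree-one continuous map $f$ within the constraint that $X$ be a \emph{finite} extension: finitely many cells cannot in general kill all of $\pi_1(N),\ldots,\pi_{n-1}(N)$, leaving a potentially nonzero primary obstruction in $H^n(M;\pi_{n-1}(X))$. Handling this cleanly is the point of the general theorem of \cite{Brunnbauer(2007a)}, which arranges a Postnikov-type refinement of the extension and a corresponding modification of $f$ on the $(n-1)$-skeleton of $M$ to cancel the obstruction using the $\pi_{n-1}(X)$-action on top-cell extensions. Verifying that this refinement still yields a finite extension, and that it does not disturb the fundamental class already in place, is the step most likely to require genuine work.
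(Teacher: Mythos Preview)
Your overall strategy is exactly the paper's: reduce to the topological statement (Theorem~\ref{mon_map_thm}) and then apply the comparison and extension axioms. The only substantive issue is the one you flag yourself in the final paragraph, and here the paper has a cleaner workaround than the obstruction-theoretic route you sketch.

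The difficulty is real: with cells of dimension at most $n-1$ you can make the extension $(n-2)$-connected but not $(n-1)$-connected, so $\pi_{n-1}$ of the extension may well be nonzero and the primary obstruction in $H^n(M;\pi_{n-1}(X))$ need not vanish. The paper avoids obstruction theory entirely in the orientable case by factoring through the sphere. Writing $M(k)$ for the (possibly infinite) complex obtained from the target manifold by successively killing $\pi_1,\ldots,\pi_{k-1}$, one has $M(n)$ and the pair $(M(n),M(n-1))$ both $(n-1)$-connected, while $M(n-1)$ is simply connected since $n\geq 3$. A diagram chase with the Hurewicz isomorphisms for $M(n)$ and for the pair then shows that the Hurewicz map $\pi_n(M(n-1))\to H_n(M(n-1);\mathbb{Z})$ is \emph{surjective}, even though $M(n-1)$ is only $(n-2)$-connected. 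Hence there is a map $s:S^n\to M(n-1)$ hitting the fundamental class, and composing with a collapse map of degree one from the source manifold onto $S^n$ gives the desired degree-one map. Finiteness is then automatic: the image of $s$ is compact, so one restricts to a finite subcomplex $V\subset M(n-1)$ in which the homological identity $s_*[S^n]=[V]$ already holds.

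So your Serre argument for finite generation of the $\pi_k$ is not needed, and your anticipated obstacle dissolves once you route the map through $S^n$. The genuinely delicate case is the nonorientable one, where $H_n(M(n-1);\mathbb{Z})=0$ and the Hurewicz-surjectivity trick breaks down; both you and the paper defer this to \cite{Brunnbauer(2007a)}. Your treatment of the mixed case and of the Hopf--Epstein step making the map $(n,1)$-monotone is correct and matches the paper.
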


This theorem obviously includes Theorem \ref{main_thm_intro}. The proof relies on the following observation.

\begin{thm}\label{mon_map_thm}
Let $M$ and $N$ be two connected closed manifolds of dimension $n\geq 3$. If either both are orientable or $N$ is nonorientable, then there exists an extension $V$ of $M$ and an $(n,1)$-monotone map $h:N\to V$ with $h_*[N]_\mathbb{K}=i_*[M]_\mathbb{K}=[V]_\mathbb{K}$, where $i:M\hookrightarrow V$ is the inclusion.
\end{thm}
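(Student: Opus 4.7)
The plan is to build $V$ by attaching cells of dimension less than $n$ to make it highly connected, then represent its fundamental class by a sphere and precompose with the standard collapsing map $N \to S^n$. I describe the argument in detail for the case that both manifolds are orientable, and indicate how it adapts (or fails to adapt) in the nonorientable cases.

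Starting from $M$, first attach finitely many $2$-cells along generators of $\pi_1(M)$, which is finitely generated since $M$ is a compact manifold, to obtain a simply connected extension $V_1$. Inductively, once $V_{k-1}$ is $(k-1)$-connected, the Hurewicz theorem gives $\pi_k(V_{k-1}) \cong H_k(V_{k-1})$, which is finitely generated because $V_{k-1}$ is a finite CW complex, so finitely many $(k+1)$-cells suffice to kill it and yield $V_k$ that is $k$-connected. Running this for $k = 2, \ldots, n-2$ uses only cells of dimension at most $n-1$ and produces an $(n-2)$-connected extension $V := V_{n-2}$ of $M$. Attaching cells of dimension strictly less than $n$ leaves top-dimensional chains and cycles unchanged, so $H_n(V;\mathbb{K}) \cong H_n(M;\mathbb{K})$ and $[V]_\mathbb{K} := i_{*}[M]_\mathbb{K}$ is well defined under the orientation hypothesis.

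Set $A := V^{(n-1)}$. Since $V$ is $(n-2)$-connected and $n \geq 3$, so is $A$; and since $V$ is obtained from $A$ by attaching $n$-cells only, the pair $(V, A)$ is $(n-1)$-connected. Relative Hurewicz therefore gives $\pi_n(V, A) \xrightarrow{\cong} H_n(V, A; \mathbb{Z})$. Combining the long exact sequences of the pair in homotopy and integer homology via this isomorphism, and using that $H_n(A; \mathbb{Z}) = 0$ since $A$ is $(n-1)$-dimensional, one sees that the absolute Hurewicz map $\pi_n(V) \to H_n(V; \mathbb{Z})$ is surjective. In the orientable case this yields $f: S^n \to V$ representing $[V]_\mathbb{Z}$. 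Triangulate $N$ and collapse the complement of a top-dimensional simplex to a point to obtain a degree-one map $q: N \to S^n$; then $h := f \circ q$ satisfies $h_{*}[N]_\mathbb{Z} = [V]_\mathbb{Z}$, and by the theorem of Epstein \cite{Epstein(1966)} a simplicial approximation of $h$ after subdivision is $(n, 1)$-monotone because its degree is $\pm 1$.

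When $N$ is nonorientable but $M$ is orientable, the same construction works with $\mathbb{K} = \mathbb{Z}_2$, since $[V]_{\mathbb{Z}_2}$ is the mod-$2$ reduction of $[V]_\mathbb{Z}$ and $q$ still has mod-$2$ degree one. The main obstacle is the remaining case in which both $M$ and $N$ are nonorientable: then $H_n(V; \mathbb{Z}) = 0$, so the mod-$2$ Hurewicz map (which factors through the integral Hurewicz map) is zero, and no sphere $S^n \to V$ can represent $[V]_{\mathbb{Z}_2}$. To handle this case I would abandon the sphere intermediate and build $h$ directly on a CW decomposition of $N$ with a single top cell: use the $(n-2)$-connectedness of $V$ to extend the constant map to $N^{(n-2)}$, push obstructions to $\pi_{n-1}(V)$ and kill them by enlarging $V$ by further $(n-1)$-cells so that the top-cell attaching maps of $N$ and of $M \subset V$ become homotopic inside $V$, and then map the top cell of $N$ homeomorphically onto the top cell of $M$. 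Making this consistent with the orientation character on $\pi_1$ is the technical heart of the construction cited from \cite{Brunnbauer(2007a)}.
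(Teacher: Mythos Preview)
Your argument follows the paper's route: attach low-dimensional cells to $M$ to make an $(n-2)$-connected extension, realise its fundamental class by a sphere via a Hurewicz diagram chase (you use the pair $(V,V^{(n-1)})$ where the paper uses $(M(n),M(n-1))$, a cosmetic difference), precompose with a pinch $N\to S^n$, and defer the case of nonorientable $M$ to \cite{Brunnbauer(2007a)}. The only point to tighten is the invocation of Epstein: his theorem is stated for maps between manifolds, and a simplicial approximation of a degree-$\pm 1$ map is not automatically $(n,1)$-monotone---one still has to cancel pairs of top simplices mapping with opposite local orientation, which the paper likewise does not spell out but cites to \cite{Brunnbauer(2007a)}, Lemma~2.2.
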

 
This is \cite{Brunnbauer(2007a)}, Theorem 7.4 for the case of the trivial group $\pi=1$. We will not give a complete proof but we will sketch the (easier) case in which $M$ is orientable.

In a first step, one has to find an extension $V$ of $M$ and a map $h':N\to V$ with $h_*[N]_\mathbb{K}=i_*[M]_\mathbb{K}$. To construct this extension one attaches $2$-cells to $M$ to kill the fundamental group, then one attaches $3$-cells to kill the second homotopy group and so on. The complex obtained after attaching $k$-cells will be called $M(k)$. Then the complex $M(n)$ and the pair $(M(n),M(n-1))$ are both $(n-1)$-connected. Since $M(n-1)$ is simply-connected (because $n\geq 3$), the two vertical Hurewicz homomorphisms on the right-hand side of the following diagram are isomorphisms.
\[\xymatrix{
& \pi_n(M(n-1)) \ar[r] \ar[d] & \pi_n(M(n)) \ar[r] \ar[d]^-\cong & \pi_n(M(n),M(n-1)) \ar[d]^-\cong \\
0 \ar[r] & H_n(M(n-1);\mathbb{Z}) \ar[r] & H_n(M(n);\mathbb{Z}) \ar[r] & H_n(M(n),M(n-1);\mathbb{Z})
}\]

A diagram chase shows that the first vertical Hurewicz homomorphism $\pi_n(M(n-1))\to H_n(M(n-1);\mathbb{Z})$ is surjective. Therefore, there is a map $s:S^n\to M(n-1)$ such that $s_*[S^n]_\mathbb{K}=i_*[M]_\mathbb{K}$. Let $M\subset V\subset M(n-1)$ be a finite subcomplex such that $s_*[S^n]_\mathbb{K}=i_*[M]_\mathbb{K}$ holds in $V$. Note that $V$ is an extension of $M$. Choose a degree one map $N\to S^n$ and compose it with $s$ to get the desired map $h':N\to V$ with $h'_*[N]_\mathbb{K}=i_*[M]_\mathbb{K}$.

In the second step this map $h'$ is deformed in such a way that the local degree of any point inside a top-dimensional cell of $V$ becomes one. The resulting map $h:N\to V$ is then $(n,1)$-monotone. The details of this step may be found in \cite{Brunnbauer(2007a)}, Lemma 2.2. The used deformation techniques go back to Hopf and were also applied in \cite{Epstein(1966)}.

The argument for the case of nonorientable $M$ is similar but more involved. This is due to the fact that in this case $H_n(M(n-1);\mathbb{Z})=0$ and that therefore the above reasoning does not work. The reader is referred to \cite{Brunnbauer(2007a)}, Theorem 7.3.

\begin{proof}[Proof of Theorem \ref{main_thm}]
By Theorem \ref{mon_map_thm} there is an extension $V$ of $M$ and an $(n,1)$-monotone map $h:N\to V$ with $h_*[N]_\mathbb{K}=i_*[M]_\mathbb{K}=[V]_\mathbb{K}$, where $i:M\hookrightarrow V$ is the inclusion. Then 
\[ \F_\mathbb{K}(N) \geq \F_\mathbb{K}(V) = \F_\mathbb{K}(M) \]
by the comparison axiom and the extension axiom. Changing the roles of $M$ and $N$ gives equality in the cases where $M$ and $N$ are both orientable or both nonorientable.
\end{proof}

Note that the proof used only the axioms and no other properties of the filling ratios. Thus it works for all numerical invariants of simplicial complexes that satisfy the comparison and extension axiom.


\section{Remarks and questions}

We have eight distinguished positive numbers in each dimension $n\geq 3$ that satisfy the following inequalities:
\begin{align*}
\FR^{or}_\mathbb{Q}(n) \leq \FR^{or}_\mathbb{Z}(n) &\geq \FR^{or}_{\mathbb{Z}_2}(n) \leq \FR^{non\mbox{-}or}_{\mathbb{Z}_2}(n) \;\;\mbox{and} \\
\FV^{or}_\mathbb{Q}(n) \leq \FV^{or}_\mathbb{Z}(n) &\geq \FV^{or}_{\mathbb{Z}_2}(n) \leq \FV^{non\mbox{-}or}_{\mathbb{Z}_2}(n).
\end{align*}
The first two inequalities of each line are direct consequences of the definitions, the last inequalities stem from Theorem \ref{main_thm}. We do not know about strict inequalities or equalities.

The exact values of these constants are also not known. Actually, the filling radius is only known in three cases:
\begin{align*}
\FillRad_\mathbb{K}(\mathbb{R}P^n, g_0) &= \pi/6 ,\\
\FillRad_\mathbb{K}(S^n,g_0) &= {\textstyle\frac{1}{2}}\arccos(-{\textstyle\frac{1}{n+1}}), \\
\FillRad_\mathbb{Z}(\mathbb{C}P^2,g_0) &= {\textstyle\frac{1}{2}}\arccos(-{\textstyle\frac{1}{3}}), \\ 
\FillRad_\mathbb{Q} (\mathbb{C}P^k,g_0) &= {\textstyle\frac{1}{2}}\arccos(-{\textstyle\frac{1}{3}}),
\end{align*}
where $g_0$ denotes the `round' metrics of constant curvature one respectively the Fubini-Study metric and $\mathbb{K}$ stands for all possible coefficient rings chosen from $\mathbb{Z}$, $\mathbb{Z}_2$, and $\mathbb{Q}$. (See \cite{Katz(1983)} and \cite{Katz(1991b)}.) By computing the ratio $\FillRad_\mathbb{K}(M,g)^n/\Vol(M,g)$ for these examples, it follows that the round projective space is not maximizing $\FR$ in dimensions $n\neq 1$ and that the standard complex space is not maximizing in even dimensions $n\geq 4$. By this calculations, one is tempted to conjecture that the supremum that defines $\FR$ is a maximum and that the round metric on the sphere maximizes this ratio. 

For the filling volume the situation is far more vague: one does not know the exact value for a single Riemannian manifold, not even for the circle.

In the case of surfaces the comparison axiom has the following consequence:

\begin{cor}\label{two_dim}
Denote the connected closed surface of genus $g$ by $\Sigma_g$ in the orientable case and by $N_g$ in the nonorientable case. Then
\[ \F_\mathbb{K} (\Sigma_g) \leq \F_\mathbb{K} (\Sigma_{g+1}) \]
and 
\[ \F_{\mathbb{Z}_2} (N_g) \leq \F_{\mathbb{Z}_2} (N_{g+1}). \]
Moreover, since $N_{2g+1}\cong \Sigma_g \# \mathbb{R}P^2$ the inequality
\[ \F_{\mathbb{Z}_2}(\Sigma_g) \leq \F_{\mathbb{Z}_2} (N_{2g+1}) \]
holds.
\end{cor}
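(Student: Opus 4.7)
The plan is to deduce all three inequalities from a single mechanism: the comparison axiom (Lemma \ref{comp_axiom}) applied to a suitable ``pinching'' map. For each pair of surfaces $A$, $B$ appearing in the statement I would exhibit a $(2,1)$-monotone simplicial map $B\to A$ that induces a surjection (indeed an isomorphism) on $H_2(\cdot;\mathbb{K})$; the axiom then gives $\F_\mathbb{K}(A)\leq\F_\mathbb{K}(B)$, which is exactly the asserted inequality.

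For (i) I would use the standard degree-one collapse $\Sigma_{g+1}\to\Sigma_g$ obtained by writing $\Sigma_{g+1}=\Sigma_g\#T^2$, removing a small open disk from the extra torus summand, and crushing what remains of that summand to a point. This map has topological degree one and so induces an isomorphism on $H_2(\cdot;\mathbb{K})$ for every coefficient ring $\mathbb{K}$. Parts (ii) and (iii) use precisely the same construction with $T^2$ replaced by $\mathbb{R}P^2$: one obtains a collapse $N_{g+1}=N_g\#\mathbb{R}P^2\to N_g$ and, using the identification $N_{2g+1}\cong\Sigma_g\#\mathbb{R}P^2$, a collapse $N_{2g+1}\to\Sigma_g$. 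Both have $\mathbb{Z}_2$-degree one, which is what is needed since $N_g$ is $\mathbb{K}$-orientable only for $\mathbb{K}=\mathbb{Z}_2$, and $\F_{\mathbb{Z}_2}$ is the only invariant that makes sense on the left-hand sides of (ii) and (iii).

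The step I expect to require the most care is the verification of $(2,1)$-monotonicity, since a naive topological collapse is not a simplicial map and a priori need not satisfy the fibrewise bound on open $2$-simplices. I would deal with this in one of two equivalent ways. Either (a) I pick triangulations of $\Sigma_g$, $T^2$ and $\mathbb{R}P^2$ for which one $2$-simplex of each is distinguished and along whose boundary the connected sum is formed; the extra summand then sits as a subcomplex that can be collapsed to one of the vertices of that boundary by a bona fide simplicial map, and a direct count shows that at most one open $2$-simplex lies above each open $2$-simplex of the target. Or (b) I invoke the Hopf/Epstein result cited just after the definition of $(n,d)$-monotonicity, which guarantees that any degree-$d$ map between triangulated closed manifolds is homotopic to a simplicial $(n,d)$-monotone map; specialising to $d=1$ provides the model required by the comparison axiom. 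Either route delivers the three maps needed and closes the proof.
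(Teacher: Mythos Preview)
Your approach is exactly the one the paper intends: the corollary is stated immediately after the sentence ``In the case of surfaces the comparison axiom has the following consequence'' and is given no separate proof, so applying Lemma~\ref{comp_axiom} to degree-one pinch maps $\Sigma_{g+1}\to\Sigma_g$, $N_{g+1}\to N_g$, and $N_{2g+1}\cong\Sigma_g\#\mathbb{R}P^2\to\Sigma_g$ is precisely what is meant.

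Two small remarks on the technical step. For route~(a), the sentence ``collapse to one of the vertices of that boundary'' does not literally give a simplicial map that is the identity on $\partial\Delta$; what works is to triangulate the punctured summand so that no $2$-simplex has all three vertices on the boundary circle, and then send every interior vertex to a fixed vertex of $\Delta$. Every $2$-simplex of the extra piece then degenerates, the preimage of the open simplex $\mathrm{int}(\Delta)$ is empty, and $(2,1)$-monotonicity together with $H_2$-surjectivity follow at once. For route~(b), be a little cautious in case~(iii): the map goes from a nonorientable surface to an orientable one, so the integral degree is zero, and one has to check that the version of the Hopf/Epstein statement being invoked really covers $\mathbb{Z}_2$-degree-one maps in dimension two. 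Route~(a), done as above, avoids this issue entirely.
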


It would be interesting to know whether equality always holds or whether strict inequality can indeed occur in the first two inequalities of the corollary. The following example may give an idea what can go wrong in dimension $n=2$.

We already mentioned the following fact: let $M$ be a connected closed oriented manifold of dimension at least two and let $d$ be a metric on $M$ (not necessarily steming from a Riemannian metric). Let $W$ be another oriented manifold with boundary $\partial W= M$. Then
\[ \FillVol_\mathbb{Z}(M,d) = \inf\{\Vol(W,g')\;\; |\;\; d_{g'}|_M \geq d \}. \]

This was proved in \cite{Gromov(1983)}, Proposition 2.2.A. Note that this infimum does not depend on the topology of $W$. (Indeed, one can always take $W=M\times[0,\infty)$.) So this theorem resembles the main theorem of this article and actually can be (and is) proved by similar methods. But in paragraph 2.2.B (2) of \cite{Gromov(1983)} it is shown that this Theorem does not hold if the dimension of $W$ is two. Thus by analogy, it may well be that strict inequalities occur in Corollary \ref{two_dim}.

However, note that Gromov's counterexample is not Riemannian in the sense that the metric $d$ on the circle $\partial W=S^1$ is not geodesic.


\bibliographystyle{amsalpha}
\bibliography{asymp_invar.bib}

\end{document}